%% file: main.tex
\documentclass{article}
\usepackage{amsmath,enumitem}
\input{mymakros.tex}
\input{settings}

\title{Stable Extensions of Complete Groups}
\author{Isaac Ochoa}
\date{}

\begin{document}

\maketitle

\begin{abstract} A group is said to be stable if it is isomorphic to its automorphism group. We investigate how we can extend centerless groups to construct finite stable groups with nontrivial centers. To this end, we classify all finite stable groups arising as central extensions of centerless groups. Furthermore, all finite stable groups arising as extensions of centerless groups by groups of nilpotency class two with trivial induced outer action on the kernel are classified. 
\end{abstract}

\begin{section}{Introduction}
Passing from a group to the group of its automorphisms is a well-motivated operation that nonetheless yields a striking amount of complexity to consider. Even in the finite case, the eventual behavior of the so-called automorphism towers generated by repeated application of this operation is not well understood. An old result of Wielandt, given in \cite{Wielandt1939}, demonstrates that for a finite centerless group, all but finitely many of the groups appearing in its automorphism tower will be isomorphic. One may hope that in understanding what kinds of groups can appear at the top of an automorphism tower with eventually stabilizing behavior, some insight can be gained into the mechanisms by which an automorphism tower might stabilize (or fail to do so) for a finite group with nontrivial center. Groups isomorphic to their automorphism groups, henceforth referred to as stable groups, therefore offer a fascinating lens through which to approach the endeavor of better understanding the interplay between a group's structure and its automorphisms. In general there is currently very little that can be said about an arbitrary stable group, due in large part to the dearth of information gleaned from the mere existence of an isomorphism inducing stability. An arbitrary complete group $G$, on the other hand, comes equipped with an explicit isomorphism $c:G\rightarrow \Aut(G)$ given by $c(g)=c_g$, where $c_g(x)=gxg^{-1}$. As such, there is a much more robust understanding of complete groups than in the general case. Completeness, however, is a much stronger property, being equivalent to the nonexistence of nontrivial central elements or outer automorphisms. This leaves a large rift between the constrained structure of complete groups and the unbridled generality of arbitrary stable groups to be explored. Some results have excluded the existence of non-complete stable groups under the presence of additional assumptions. In particular, the nonexistence of many kinds of finite stable $p$-groups is established by Cutolo in \cite{CutoloD4}, although it remains an open problem (originally posed by A. Mann, see \cite[Problem 15.29]{KNotebook}) whether there are any finite stable $p$-groups other than the dihedral group of order eight. Despite this, little work has been devoted to the construction of non-complete stable groups. The pursuit of such constructions will serve as the focus of this paper. We will leverage the structure of complete groups, considering stable groups obtained as extensions of complete groups. 

In Section 3 we establish that finite stable groups arising as extensions of centerless groups by nilpotent groups whose induced outer action on the kernel is trivial must be isomorphic to the direct product of the kernel and quotient, and further that the quotient is itself complete (Lemma \ref{split}). As such, while we aim to exploit the structure of complete groups in our constructions, we need only assume a centerless quotient to do so. Lemma \ref{split} is then applied to classify all finite stable groups arising as central extensions of centerless groups (Theorem \ref{abthm}). We find that such extensions must have kernel of order two, and give necessary and sufficient conditions on the structure of the necessarily complete quotient for such an extension to yield a stable group. In Section 4 we classify all finite groups arising as extensions of centerless groups by groups of nilpotency class two whose induced outer action on the kernel is trivial (Theorem \ref{nilthm}), again using Lemma \ref{split} to conclude a direct product structure and the completeness of the centerless factor. We find that the nilpotent factor must be isomorphic to dihedral group of order eight, and again establish necessary and sufficient conditions on the structure of the complete factor for such a product to yield a stable group. To the knowledge of the author, the groups classified by Theorems \ref{abthm} and \ref{nilthm} constitute all known non-complete finite stable groups.
\end{section}

\section{Notation \& Background}

In this section we give pertinent definitions and outline the notation to be used throughout the paper. All groups are assumed to be finite.

Given a group $G$ and an element $g \in G$, conjugation by $g$ is represented by $c_g$ throughout, where $c_g(x)=gxg^{-1}$ for all $x \in G$. A group $G$ is said to be stable if $G\cong \Aut(G)$. Recall a group $G$ is complete if the conjugation map $c:G\rightarrow \Aut(G)$, defined by $c(g)=c_g$, is an isomorphism. 

Let $n \in \bbZ_{\geq 1}$. We use $C_n$ to denote the cyclic group of order $n$ and $D_n$ to denote the dihedral group of order $2n$. Euler's totient function is represented by $\phi$, where \[\phi(n)=|\{k \in \bbZ_{\geq 1} \mid k <n \textrm{ and } \gcd(k,n)=1\}|.\] For any prime $p$, we write $\upsilon_p(n)$ to denote the $p$-adic valuation of $n$, the exponent of the highest power of $p$ dividing $n$.

We write $G'$ and $G^{\ab}$ to denote the derived subgroup and abelianization of $G$, respectively. Given a subgroup $N\leq G$, we write $C_G(N)$ to denote the centralizer of $N$ in $G$. The upper-central series of a group $G$ is denoted \[\{e_G\}=Z_0(G)\norm Z_1(G)\norm Z_2(G)\norm \cdots,\] where $Z_{i+1}(G)=\{x \in G\mid [x,y]\in Z_i(G) \textrm{ for all } y \in G\}$. 

Finally, recall that for a finite group $G$, the following are equivalent: 
\begin{enumerate}[label=\roman*)]
    \item $G$ is nilpotent.
    \item $Z_i(G)=G$ for some $i \in \bbZ_{\geq 0}$.
    \item $G$ is an internal direct product of its Sylow $p$-subgroups.
\end{enumerate}

\section{Stable Central Extensions of Complete Groups}

Given a centerless group $K$, it is natural to ask how we may extend $K$ to a form a stable group with nontrivial central elements. To this end, we begin this section by establishing a direct product structure for stable groups arising as extensions of centerless groups by nilpotent groups whose induced outer action on the kernel is trivial, and further establish completeness of the centerless factor. We then proceed with a consideration of the consequences of an abelian kernel, and obtain as a result the classification of all stable groups arising as central extensions with centerless quotients.

\begin{thm} 
\label{extchar}
Let $1\rightarrow N\rightarrow G\xrightarrow{\pi} K\rightarrow 1$ be an extension such that the induced outer action on $N$ is trivial. Identifying $N$ with its image in $G$, if $N$ is nilpotent and $K$ is centerless, then $Z_i(N)=Z_i(G)$ for all $i \in \bbZ_{\geq 0}$
and both $N$ and $C_G(N)$ are characteristic in $G$.
\end{thm}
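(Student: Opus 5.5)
The plan is to first extract the structural consequence of trivial outer action, then prove $Z_i(N)=Z_i(G)$ by induction on $i$, and finally read off characteristicity.

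\textbf{Step 1: the factorization $G=N\,C_G(N)$.} Triviality of the induced outer action means that for every $g\in G$ the restriction $c_g|_N$ is an inner automorphism of $N$. So there is $n\in N$ with $c_g|_N=c_n|_N$. Then $n^{-1}g$ centralizes $N$, so every $g$ can be written as $g=nc$ with $n\in N$ and $c\in C_G(N)$. This gives $G=N\,C_G(N)$.

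\textbf{Step 2: the case $i=1$.} Since $\pi$ is surjective, $\pi(Z(G))\le Z(K)=1$, so $Z(G)\le N$. An element of $G$ lying in $N$ and central in $G$ is central in $N$, hence $Z(G)\le Z(N)$. Conversely, an element of $Z(N)$ commutes with $N$, and also with $C_G(N)$ by definition. By Step 1 it is therefore central in $G=N\,C_G(N)$, so $Z(N)\le Z(G)$.

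\textbf{Step 3: the induction.} Suppose $Z_i(N)=Z_i(G)=:Z$. Consider the induced extension
\[1\rightarrow N/Z\rightarrow G/Z\rightarrow K\rightarrow 1.\]
\begin{itemize}
\item The quotient is still $K$, since $Z\le N$.
\item The kernel $N/Z$ is nilpotent, being a quotient of the nilpotent group $N$.
\item The outer action is still trivial. Writing $g=nc$ as in Step 1, the coset $cZ$ centralizes $N/Z$, so $gZ$ acts on $N/Z$ as the inner automorphism induced by $nZ$.
\end{itemize}
The case $i=1$ therefore applies to this extension and gives $Z(G/Z)=Z(N/Z)$. Taking preimages yields $Z_{i+1}(G)=Z_{i+1}(N)$, since $Z_{i+1}$ of a group is the preimage of the center of its quotient by $Z_i$.

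The only care needed is this hereditary check: that the quotient extension still has trivial outer action and the same centerless quotient $K$. The first bullet and the third bullet above handle exactly this, so I do not expect a genuine obstacle.

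\textbf{Step 4: characteristicity.} Since $N$ is nilpotent, $N=Z_m(N)$ for some $m$, and by Step 3 this equals $Z_m(G)$. Terms of the upper central series are characteristic, so $N$ is characteristic in $G$. Any automorphism of $G$ preserves $N$ and hence maps $C_G(N)$ to $C_G(N)$, so $C_G(N)$ is characteristic as well.
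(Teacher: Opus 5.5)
Your proof is correct and is essentially the paper's argument. The paper does the inductive step directly with commutators in $G$: elements of $Z_{i+1}(G)$ map into $Z(K)=1$, and for $m\in Z_{i+1}(N)$ trivial outer action turns $[m,h]$ into some $[m,n]$ with $n\in N$. You package those same two observations as the case $i=1$ and push the hypotheses down to $G/Z_i(G)$, which is only a reorganization.
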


\begin{proof}
As the outer action on $N$ is trivial we have $G=NC_G(N)$. We will show by induction that $Z_i(G)=Z_i(N)$ for all $i \in \bbZ_{\geq 0}$. Suppose $Z_i(N)=Z_i(G)$ for some $i \in \bbZ_{\geq 0}$. Let $z \in Z_{i+1}(G)$ and $k \in K$. Note that $k=\pi(g_k)$ for some $g_k \in G$. Then $[z,g_k] \in Z_i(G)=Z_i(N)$. As $Z_i(N)\leq N=\ker \pi$, we have 
\begin{align*}
[\pi(z),k]=[\pi(z),\pi(g_k)]=\pi([z,g_k])=e_K.
\end{align*} Thus $\pi(z) \in Z(K)$ as $k$ is arbitrary. However, $K$ is centerless, so $\pi(z)=e_K$, and thus $z \in \ker\pi=N$. Let $\ell \in N$. Then $[z,\ell] \in Z_i(G)$ as $z \in Z_{i+1}(G)$. Further, $Z_i(G)=Z_i(N)$ by assumption, so $z \in Z_{i+1}(N)$ as $\ell$ is arbitrary and $z \in N$. Thus, as $z$ is arbitrary,  $Z_{i+1}(G)\leq Z_{i+1}(N)$. Let $m \in Z_{i+1}(N)$ and $h \in G$. As the outer action on $N$ is trivial, $c_{h}(m^{-1})=c_n(m^{-1})$ for some $n \in N$. Thus 
\begin{align*}
[m,h]=mc_{h}(m^{-1})=mc_{n}(m^{-1})=[m,n] \in Z_{i}(N).
\end{align*} By assumption, $Z_{i}(N)=Z_{i}(G)$, so $m \in Z_{i+1}(G)$ as $h$ is arbitrary. Since $m$ is arbitrary, $Z_{i+1}(N)\leq Z_{i+1}(G)$, so $Z_{i+1}(N)=Z_{i+1}(G)$. Finally, observe that we clearly have $Z_0(N)=Z_0(G)$. Therefore, by induction we have $Z_i(G)=Z_i(N)$ for all $i \in \bbZ_{\geq 0}$. 

As $N=Z_j(G)$ for some $j \in \bbZ_{\geq 0}$, $N$ is characteristic in $G$, so $C_G(N)$ is characteristic in $G$.
\end{proof}

\begin{lemma}\label{split} Let $1\rightarrow N\rightarrow G\xrightarrow{\pi} K\rightarrow 1$ be an extension such that the induced outer action of $K$ on $N$ is trivial. If $N$ is nilpotent, $K$ is centerless, and $G$ is stable, then $G\cong N\times K$ and $K$ is complete.

\end{lemma}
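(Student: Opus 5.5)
The plan is to use stability only to transport the structure of $G$ to $A=\Aut(G)$, and then to locate a copy of $N$ and a copy of $K$ inside $A$ that are forced to form a direct product. Throughout, $N$ is identified with its image in $G$ and $C=C_G(N)$.

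\textbf{Step 1: the hypercenter of $G$.} Theorem \ref{extchar} gives $Z_i(G)=Z_i(N)$ for all $i$. Since $N$ is nilpotent, the upper central series of $G$ stabilizes at $N$, so the hypercenter is $Z_\infty(G)=N$ and $G/Z_\infty(G)\cong K$. Triviality of the outer action gives $G=NC$ with $N\cap C=Z(N)=Z(G)$, hence $C/Z(G)\cong G/N\cong K$. Since $C$ is characteristic in $G$ (Theorem \ref{extchar}), the group $c(C)=\{c_x : x\in C\}$ is normal in $A$. Moreover $c(C)\cong C/(C\cap Z(G))=C/Z(G)\cong K$.

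\textbf{Step 2: the direct product in $A$.} Fix an isomorphism $G\cong A$. Then $H:=Z_\infty(A)\cong N$ and $A/H\cong K$.
\begin{itemize}
\item Suppose $c(C)\cap H\neq 1$. It is a nontrivial normal subgroup of the finite group $A$ lying in the hypercenter, so it meets $Z(A)$ nontrivially. But $Z(A)\cap c(C)\le Z(c(C))\cong Z(K)=1$, a contradiction. Hence $c(C)\cap H=1$.
\item Now $c(C)$ embeds in $A/H\cong K$, and both have order $|K|$, so $A=Hc(C)$.
\item Both $H$ and $c(C)$ are normal in $A$ with trivial intersection, so $[H,c(C)]\le H\cap c(C)=1$.
\end{itemize}
Therefore $A=H\times c(C)\cong N\times K$, and so $G\cong A\cong N\times K$.

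\textbf{Step 3: completeness of $K$.} Now take $G=N\times K$ itself, so that $C=Z(N)\times K$ and $C/Z(C)\cong K$. Because $C$ is characteristic in $G$, its center is characteristic too, so restriction and reduction define a homomorphism $\rho\colon A\to\Aut(K)$. This map is surjective: extend any automorphism of $K$ by the identity on $N$. Applying Step 2 to this $G$ gives $A=H\times c(K)$.

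Since $H$ commutes with $c(K)$, the image $\rho(H)$ centralizes $\rho(c(K))=\mathrm{Inn}(K)$ inside $\Aut(K)$. For centerless $K$ this centralizer is trivial: $\alpha c_k\alpha^{-1}=c_{\alpha(k)}$ together with $c$ injective forces $\alpha=\mathrm{id}$. Hence $H\le\ker\rho$, and
\[
|\Aut(K)|=|A|/|\ker\rho|\le |N||K|/|N|=|K|.
\]
As $K$ is centerless, $\mathrm{Inn}(K)\cong K$ already has order $|K|$, so $\Aut(K)=\mathrm{Inn}(K)$ and $K$ is complete.

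The main obstacle I expect is the first bullet of Step 2: proving that $c(C)$ meets the hypercenter of $A$ trivially. That is the step that turns the abstract isomorphism $G\cong\Aut(G)$ into a concrete splitting, and it relies on the standard fact that a nontrivial normal subgroup inside the hypercenter of a finite group meets the center. The remaining steps are routine order counts.
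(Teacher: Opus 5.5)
Your proof is correct, but it reaches completeness of $K$ by a different route from the paper; the splitting step is essentially the paper's argument in mirror image.

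\textbf{Splitting.} The paper works inside $G$. It transports $L=\{\id_N\}\times\Inn(C_G(N))$, which is exactly your $c(C)$, from $\Aut(G)$ into $G$ as a normal subgroup $\widetilde{K}$. It then shows $N\cap\widetilde{K}=\{e_G\}$ using nilpotency of $N$ and $Z(N)=Z(G)$. You instead keep $c(C)$ inside $\Aut(G)$ and transport $N$, as the hypercenter $H=Z_\infty(\Aut(G))$. The same intersection-and-count argument then applies. A small advantage of your version is that it never needs the fibered-product description $\Aut(G)\cong\Aut(N)\times_{\Aut(Z(G))}\Aut(C_G(N))$; you only need normality of $c(C)$, which follows from $C_G(N)$ being characteristic.

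\textbf{Completeness of $K$.} Here the two arguments genuinely differ.
\begin{itemize}
\item The paper realizes $\Aut(K)$ as a \emph{subgroup}. It embeds $\Aut(K)$ as $\{\id_N\}\times\Aut(K)\le\Aut(G)$ and transports it to a centerless subgroup $\widetilde{M}\le G$. A commutator-descent argument shows that a centerless subgroup meets the hypercenter $N$ trivially, so $\widetilde{M}$ embeds in $G/N\cong K$.
\item You realize $\Aut(K)$ as a \emph{quotient}, via the surjection $\rho\colon\Aut(G)\to\Aut(C/Z(C))\cong\Aut(K)$. You then force $H\le\ker\rho$ using the decomposition $\Aut(G)=H\times c(K)$ from your Step 2, together with $C_{\Aut(K)}(\Inn(K))=1$.
\end{itemize}
Your route reuses Step 2 and needs no separate descent argument. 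The price is that you must first replace $G$ by $N\times K$ to make $\rho$ surjective and to identify $\rho(c(K))=\Inn(K)$. The paper's route needs no such map, only the embedding of $\Aut(K)$ as a direct-factor automorphism group. Both end with the same count $|\Inn(K)|\le|\Aut(K)|\le|K|$.
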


\begin{proof} Identify $N$ with its image in $G$, and observe that $G$ is a central product of $N$ and $C_G(N)$. Define \[F=\Aut(N)\times_{\Aut(Z(G))}\,\Aut(C_G(N)).\] As $N$ and $C_G(N)$ are characteristic in $G$ by Lemma \ref{extchar}, we have $\Aut(G)\cong F$. Let $L=\{\id_{N}\}\times \Inn(C_G(N))$ and observe that $L \norm F$. As $G$ is stable, $F \cong \Aut(G)\cong G$. Therefore, there is a normal subgroup $\widetilde{K}\norm G$ with $L \cong \widetilde{K}$. Furthermore, 
$L\cong \Inn(C_G(N))\cong K$, so $K\cong\widetilde{K}$. 

Suppose towards contradiction $N\cap \widetilde{K}\neq \{e_G\}$. As $N$ is nilpotent and $N\cap \widetilde{K}\norm N$, we have $Z(N)\cap (N\cap \widetilde{K})\neq \{e_G\}$, so $Z(N)\cap \widetilde{K}\neq \{e_G\}$. By Lemma \ref{extchar}, $Z(N)=Z(G)$, so $Z(G)\cap \widetilde{K}\leq Z(\widetilde{K})$ is nontrivial, which is clearly a contradiction as $\widetilde{K}\cong K$ is centerless. Thus $N\cap \widetilde{K}=\{e_G\}$, so $|N\widetilde{K}|=|N|\cdot |\widetilde{K}|=|G|$, giving $G=N\widetilde{K}$. As $N\norm G$ and $\widetilde{K}\norm G$, we find $G\cong N\times \widetilde{K}\cong N \times K$.

Since $K$ is isomorphic to a direct factor of $G$, there is a subgroup $M\leq \Aut(G)$ such that $M\cong \Aut(K)$. As $\Aut(G)\cong G$, we further have a subgroup $\widetilde{M}\leq G$ with $\widetilde{M}\cong M\cong \Aut(K)$. As $K$ is centerless, it follows that $\Aut(K)\cong \widetilde{M}$ is centerless. Let $j$ be the nilpotency class of $N$, and note $Z_j(G)=N$ by Lemma \ref{extchar}. Suppose towards contradiction there exists a nontrivial element $m_j \in \widetilde{M}\cap Z_j(G)=\widetilde{M}\cap N$. As $\widetilde{M}$ is centerless and $m_j \in \widetilde{M}$, there exists an element $x \in \widetilde{M}$ such that $[m_j,x]\neq e_G$. As $\widetilde{M}$ is a subgroup and $m_j \in Z_j(G)$, we have $[m_j,x] \in \widetilde{M}\cap Z_{j-1}(G)$. Setting $m_{j-1}=[m_j,x]$ and continuing in this manner, we obtain a sequence of nontrivial elements $m_{j},m_{j-1},\dots, m_1$. However $m_1 \in \widetilde{M}\cap Z(G)=\{e_G\}$, which is a contradiction as $m_1$ was constructed to be nontrivial. Thus no such $m_j$ exists, so $|\widetilde{M}\cap N|=1$. 

We therefore see that the natural projection of $\widetilde{M}$ onto $G/N$ has trivial kernel, so $|\Aut(K)|=|\widetilde{M}|\leq |G/N|=|K|$. Since $K$ is centerless, we further have $|K|=|\Inn(K)|\leq |\Aut(K)|$, so $|\Inn(K)|=|\Aut(K)|$. Therefore $\Inn(K)=\Aut(K)$, so $K$ is complete as $K$ is centerless, as desired.
\end{proof}

\begin{lemma} 
\label{decomp}
 Let $G=N \times K$ where $K$ is a complete group and $N$ shares no direct factors with $K$. Then $\Aut(G)$ is isomorphic to  $(\Hom(K^{\ab},Z(N))\rtimes \Aut(N))\times K$ where $\Aut(N)$ acts on $\Hom(K^{\ab},Z(N))$ by post-composition.
\end{lemma}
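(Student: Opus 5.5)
Every automorphism of $G=N\times K$ is described by a $2\times 2$ matrix of homomorphisms, and we use completeness of $K$ together with the hypothesis that $N$ and $K$ share no direct factors to kill the off-diagonal entries. Write $\alpha\in\Aut(G)$ as
\[
\alpha(n,k)=\bigl(\alpha_{NN}(n)\,\alpha_{KN}(k),\ \alpha_{NK}(n)\,\alpha_{KK}(k)\bigr),
\]
where $\alpha_{NN}\colon N\to N$, $\alpha_{KN}\colon K\to N$, $\alpha_{NK}\colon N\to K$ and $\alpha_{KK}\colon K\to K$ are homomorphisms. They satisfy commutation conditions: the images of $\alpha_{NN}$ and $\alpha_{KN}$ commute elementwise, and so do the images of $\alpha_{NK}$ and $\alpha_{KK}$.

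\emph{Step 1: $\alpha(K)=K$.} The first thing to prove is that $\alpha$ maps the factor $K$ onto itself.
\begin{itemize}
\item Since $K$ is complete, it is a direct factor of any group containing it as a normal subgroup.
\item The image $\alpha(K)$ is normal in $G$ and isomorphic to $K$, so $\alpha(K)$ is itself complete.
\item Hence $G=\alpha(K)\times C_G(\alpha(K))$. The same argument gives $G=K\times C_G(K)$, with $C_G(K)=N\times Z(K)=N$.
\item Consider $\alpha(K)\cap N$. If $\alpha(K)\cap N=1$, then $\alpha(K)$ embeds into $G/N\cong K$, so the projection $\alpha(K)\to K$ is an isomorphism.
\item In that case $[\alpha(K),\alpha(K)]$ and $[K,K]$ interact so that $\alpha(K)=\{(f(k),k)\}$ for some homomorphism $f\colon K\to Z(N)$. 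Normality then forces $f$ to be trivial on $K'$, so $\alpha(K)$ is the graph of an element of $\Hom(K^{\ab},Z(N))$.
\item If instead $\alpha(K)\cap N\ne 1$, then $\alpha(K)\cap N$ is a nontrivial normal subgroup of the complete group $\alpha(K)$. It is a direct factor of $G$ sitting inside $N$, and by Krull--Remak--Schmidt it forces $N$ to have a direct factor isomorphic to a direct factor of $K$. This is excluded by hypothesis.
\end{itemize}
This case analysis, and specifically using the "no common direct factor" hypothesis through Krull--Remak--Schmidt to rule out the mixed cases, is the step I expect to be the main obstacle.

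\emph{Step 2: $\alpha_{NK}$ is trivial.} Symmetrically, the map $\alpha_{NK}\colon N\to K$ must have image commuting with $\alpha_{KK}(K)$. By Step 1, $\alpha_{KK}$ is an automorphism of $K$. So the image of $\alpha_{NK}$ lies in $Z(K)=1$.

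\emph{Step 3: conclusion.} By Steps 1 and 2, every automorphism has the form
\[
(n,k)\mapsto\bigl(\beta(n)f(k),\ \gamma(k)\bigr)
\]
with $\beta\in\Aut(N)$, $f\in\Hom(K^{\ab},Z(N))$ and $\gamma\in\Aut(K)=\Inn(K)\cong K$. Conversely, every such triple defines an automorphism, since the image of $f$ is central. The bijection is therefore with triples $(f,\beta,\gamma)$. Composing two such maps gives $(f,\beta)(f',\beta')=(f+\beta\circ f',\beta\beta')$, which is the semidirect product with action by post-composition. The $\gamma$ coordinate composes independently, since $f$ factors through $K^{\ab}$ and $\gamma$ is inner, so $f\circ\gamma=f$. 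This gives $\Aut(G)\cong(\Hom(K^{\ab},Z(N))\rtimes\Aut(N))\times K$.
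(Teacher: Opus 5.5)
There is a genuine gap in Step 1, in the case $\alpha(K)\cap N\neq 1$. This is exactly the place where the hypothesis that $N$ and $K$ share no direct factor has to do its work.

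You assert that $\alpha(K)\cap N$ is a direct factor of $G$. The only reason given is that it is a nontrivial normal subgroup of the complete group $\alpha(K)$, and that does not imply it. Normal subgroups of complete groups are in general neither direct factors nor complete: take $A_3\trianglelefteq S_3$ or $V_4\trianglelefteq S_4$.

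Krull--Remak--Schmidt is also invoked at the wrong point. If you already knew $\alpha(K)\cap N$ were a direct factor of $G$, Dedekind's modular law would make it a direct factor of both $N$ and $\alpha(K)\cong K$ with no further input. The real content is showing that the diagonal entry $\alpha_{KK}$ is an automorphism, and that needs a genuine Fitting-lemma/Remak argument.

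One way to close the gap uses the form of Krull--Remak--Schmidt saying that any two decompositions of a finite group into indecomposables are related by a central automorphism:
\begin{enumerate}[label=\roman*)]
\item Refine $N\times K$ and $\alpha(N)\times\alpha(K)$ into indecomposables. Some central automorphism $\theta$ of $G$ carries the first list onto the second, with $\theta(X)\cong X$ for each factor $X$.
\item No indecomposable factor of $K$ is isomorphic to one of $N$, so $\theta(K)=\alpha(K)$.
\item Since $Z(G)=Z(N)$, this gives $\alpha(K)=\{(z(k),k)\}$ for a homomorphism $z\colon K\to Z(N)$. That handles both of your cases at once.
\end{enumerate}

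Two smaller points. First, the headline ``$\alpha(K)=K$'' is false whenever $f\neq 1$; what you actually use is that the projection $\alpha(K)\to K$ is an isomorphism. Second, $f(K)\le Z(N)$ follows directly from conjugating the graph by elements $(n,1)$, with no commutator argument needed. Triviality of $f$ on $K'$ is then automatic.

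For comparison, the paper never analyzes an arbitrary automorphism. It builds the injective homomorphism $\Phi$ from triples, which is your Step 3. It then gets surjectivity by counting, using the Bidwell--Curran--McCaughan formula $|\Aut(N\times K)|=|\Hom(K,Z(N))|\,|\Hom(N,Z(K))|\,|\Aut(N)|\,|\Aut(K)|$ for groups with no common direct factor, together with $Z(K)=1$ and $|\Aut(K)|=|K|$. That cited theorem is precisely where the Fitting-lemma work lives. So your Step 3 plus that citation is already a complete proof, and your Steps 1--2 amount to an unfinished reproof of it in this special case.
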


\begin{proof}
Let $*$  denote the group multiplication of $(\Hom(K^{\ab},Z(N))\rtimes \Aut(N))\times K$. Define \[\Phi:(\Hom(K^{\ab},Z(N))\rtimes \Aut(N))\times K\rightarrow \Aut(G)\] by $\Phi((\alpha,\beta,\gamma))=\varphi$, where $\varphi((n,k))=(\beta(n)\alpha(kK'),c_{\gamma}(k))$. One quickly checks that $\Phi$ is a well-defined, i.e. that each such $\varphi$ is automorphism. We aim to show $\Phi$ is an isomorphism. Let $(\alpha,\beta,\gamma),(\lambda,\mu,\nu) \in (\Hom(K^{\ab},Z(N))\rtimes \Aut(N))\times K$ and $(n,k) \in G$. We have 
\begin{align*}(\Phi((\alpha,\beta,\gamma))\circ \Phi((\lambda,\mu,\nu)))(n,k)
&=\Phi((\alpha,\beta,\gamma))(\mu(n)\lambda(kK'),c_\nu(k))\\
&=(\beta(\mu(n)\lambda(kK'))\alpha(c_\nu(k)K'),c_\gamma(c_\nu(k)))\\
&=(\beta(\mu(n)\lambda(kK'))\alpha(c_\nu(k)K'),c_{\gamma\nu}(k))
\end{align*} 
and  
\begin{align*}\Phi((\alpha,\beta,\gamma)*(\lambda,\mu,\nu))(n,k)
&=\Phi((\alpha\cdot (\beta\circ\lambda),\beta\circ \mu,\gamma\nu))(n,k)\\
&=((\beta\circ \mu)(n)\cdot \alpha(kK')\cdot (\beta \circ \lambda)(kK'),c_{\gamma \nu}(k))\\
&=(\beta(\mu(n)\lambda(kK'))\alpha(kK'),c_{\gamma\nu}(k)).\end{align*} 
Note that $
c_{\nu}(k)K'=kK'$, so \[(\Phi((\alpha,\beta,\gamma))\circ \Phi((\lambda,\mu,\nu)))(n,k)=\Phi((\alpha,\beta,\gamma)*(\lambda,\mu,\nu))(n,k),\] and thus $\Phi$ is a homomorphism. One quickly verifies that $\ker \Phi$ is trivial, giving the injectivity of $\Phi$. 

As $\Phi$ is an injective homomorphism, $(\Hom(K^{\ab},Z(N))\rtimes \Aut(N))\times K$ is isomorphic to a subgroup of $\Aut(G)$ of order $|\Hom(K^{\ab},Z(N))|\cdot |\Aut(N)| \cdot |K|$. By \cite[Theorem 3.2]{BidwellProducts}, since $N$ and $K$ have no factors in common, \begin{align*} 
|\Aut(G)|
&=|\Hom(K,Z(N))|\cdot |\Hom(N,Z(K))|\cdot |\Aut(N)|\cdot |\Aut(K)|\\
&=|\Hom(K^{\ab},Z(N))|\cdot |\Hom(N^{\ab},Z(K))|\cdot |\Aut(N)|\cdot |\Aut(K)|.\end{align*} As $K\cong \Aut(K)$, we have $|\Aut(K)|=|K|$. Furthermore, as $K$ is centerless, $|\Hom(N,Z(K))|=1.$ Thus \[|\Aut(G)|=|\Hom(K^{\ab},Z(N))|\cdot |\Aut(N)|\cdot |K|=|\im\Phi|,\] so $\Phi$ is surjective and thus an isomorphism.
\end{proof}

\begin{lemma}
\label{div}
Let $A$ be a nontrivial abelian group. If $\Aut(A)$ is isomorphic to a (possibly trivial) direct factor of $A$, then $A\cong C_2$ or $A\cong C_6$. \end{lemma}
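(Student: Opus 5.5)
Write $A\cong A_1\times B$ with $\Aut(A)\cong B$. Since $B$ is abelian, $\Aut(A)$ is abelian, and this forces strong restrictions on $A$. The plan has two steps. First show that $\Aut(A)$ abelian makes $A$ cyclic, except for a small exception. Then compare orders in the cyclic case.

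\textbf{Step 1: reduction to cyclic $A$.} Decompose $A=\prod_p A_p$ into Sylow subgroups, so that $\Aut(A)\cong\prod_p\Aut(A_p)$ and each $\Aut(A_p)$ must be abelian. Suppose $A_p$ is not cyclic. Then it has a direct factor $C_{p^a}\times C_{p^b}$ with $a\le b$.
\begin{itemize}
\item If $p$ is odd, or if $a<b$, the automorphism group of this factor is already nonabelian. For $a=b$, $\Aut$ contains $GL_2(\bbZ/p^a)$, which is nonabelian. For $a<b$, one writes down two explicit non-commuting automorphisms (shear maps together with a scaling).
\item If $p=2$ and $a=b$, then $C_{2^a}^2$ has automorphism group containing $GL_2(\bbZ/2)\cong S_3$, again nonabelian.
\end{itemize}
It extends to all of $A_p$, because the factor splits off and automorphisms of a direct factor extend by the identity on a complement. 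Hence every $A_p$ is cyclic, so $A\cong C_n$ and $\Aut(A)\cong(\bbZ/n)^\times$, of order $\phi(n)$.

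\textbf{Step 2: the cyclic case.} Now $B\le C_n$ is cyclic of order $d$ with $d\mid n$, $\gcd(d,n/d)=1$ and $\phi(n)=d$.
\begin{itemize}
\item Because $(\bbZ/n)^\times$ must be cyclic, $n\in\{1,2,4,p^k,2p^k\}$.
\item If $p^k\mid n$ with $p$ odd, then $p-1$ divides $\phi(n)=d$, so $p-1$ divides $n$. Thus $p-1$ is a power of $2$ times a power of $p$, which forces $p-1$ to be a power of $2$.
\end{itemize}
Run through the cases:
\begin{itemize}
\item $n=1$ is excluded because $A$ is nontrivial.
\item $n=2$ gives $d=1$, which works.
\item $n=4$ gives $d=2$, but $\gcd(2,2)\neq1$, so it fails.
\item $n=p^k$ requires $\phi(n)=p^{k-1}(p-1)$ to divide $p^k$, which forces $p-1=1$. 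This is impossible for odd $p$.
\item $n=2p^k$ has $\phi=p^{k-1}(p-1)$, which must divide $2p^k$. So $p-1\mid 2$, giving $p=3$. Then $d=2\cdot 3^{k-1}$, and the coprimality condition with $n/d=3$ forces $k=1$, so $n=6$ and $d=2$.
\end{itemize}
This leaves $A\cong C_2$ or $C_6$.

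The main obstacle is Step 1. One has to handle cleanly the fact that $\Aut(A_p)$ is nonabelian whenever $A_p$ is noncyclic, including the $p=2$ cases. I would try first to exhibit explicit non-commuting automorphisms of $C_{p^a}\times C_{p^b}$, namely the shears $(x,y)\mapsto(x,y+\iota(x))$ and $(x,y)\mapsto(x+\rho(y),y)$ with suitable homomorphisms $\iota$ and $\rho$, and to check that they fail to commute.
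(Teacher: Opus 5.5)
Your proof is correct and follows the paper's route: reduce to cyclic $A$ using that $\Aut(A)$ is abelian, then combine $\phi(n)=d$, $d\mid n$, $\gcd(d,n/d)=1$ and the cyclicity of $(\bbZ/n\bbZ)^{\times}$. The differences are minor. You apply the primitive-root classification $n\in\{1,2,4,p^k,2p^k\}$ first, whereas the paper starts from the standard fact that $\phi(n)\mid n$ forces $n=2^{e_1}3^{e_2}$ and uses cyclicity only at the end to exclude $n=3\cdot 2^{e_1}$ with $e_1\geq 2$ (e.g.\ $n=12$). You also justify ``$\Aut(A)$ abelian $\Rightarrow$ $A$ cyclic'' (your shears do fail to commute once $\iota\rho\neq 0$, e.g.\ $\rho$ surjective and $\iota$ injective), a step the paper simply asserts.
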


\begin{proof} Observe that as $\Aut(A)$ is abelian, $A$ must be cyclic. Let $n=|A|$ and suppose $n>2$. Since $\Aut(A)\cong (\bbZ/n\bbZ)^{\times}$, we have $|\Aut(A)|=\phi(n)$. Furthermore, as $\Aut(A)$ is isomorphic to a direct factor of $A$ by assumption, we have $|\Aut(A)|$ divides $|A|$. Thus $\phi(n) \mid n$. It is a standard result that such a positive integer $n$ is of the form $n=2^{e_1}3^{e_2}$ for some $e_1 \in \bbZ_{\geq 1}$ and $e_2 \in \bbZ_{\geq 0}$. As $A$ is cyclic, if $\Aut(A)$ is isomorphic to a direct factor of $A$, we have \[\gcd\biggl(|\Aut(A)|,\dfrac{|A|}{|\Aut(A)|}\biggr)=\gcd\biggl(\phi(n),\dfrac{n}{\phi(n)}\biggr)=1.\]

If $e_2=0$, then $n=2^{e_1}$, so \[\gcd\biggl(\phi(n),\dfrac{n}{\phi(n)}\biggr)=\gcd(2^{e_1-1},2)=2^{\min(e_1-1,1)}.\] We then have $2^{\min(e_1-1,1)}=1$, so $e_1-1=0$, and thus $n=2$, giving $A\cong C_2$. 

If  $e_2\geq 1$, then \[\gcd\biggl(\phi(n),\dfrac{n}{\phi(n)}\biggr)=\gcd(2^{e_1}3^{e_2-1},3)=3^{\min(e_2-1,1)}.\] We therefore find that $n=3\cdot 2^{a_1}$. Thus $A\cong C_3 \times C_{2^{a_1}}$. Observe that $\Aut(C_3)\times \Aut(C_{2^{e_1}})$ is isomorphic to a subgroup of $A$, and therefore is cyclic. As $\Aut(C_3)\cong C_2$, $\Aut(C_{2^{e_1}})$ must have odd order, so $e_2=1$ and thus $n=6$, giving $A\cong C_6$

For $A\cong C_2$, $\Aut(A)$ is trivial, and thus trivially a direct factor of $A$. For $A\cong C_6$, $\Aut(A)\cong C_2$ is a direct factor of $A\cong C_3\times C_2$, and thus the result holds. \end{proof}

\begin{thm}\label{abthm} Let $1\rightarrow A \rightarrow G \rightarrow K \rightarrow 1$ be a central extension with $A$ nontrivial and $K$ centerless. Then $G$ is stable if and only if $G\cong C_2 \times K$, $K$ is complete, and $K^{\ab}$ has a nontrivial cyclic Sylow $2$-subgroup.
\end{thm}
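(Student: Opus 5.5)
For the forward direction, assume $G$ is stable. A central extension has trivial induced outer action on $A$, and $A$ is abelian, hence nilpotent. So Lemma \ref{split} applies and gives $G\cong A\times K$ with $K$ complete.

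Next we apply Lemma \ref{decomp}. A complete group is centerless, while every nontrivial direct factor of $A$ is abelian and hence not centerless. A nontrivial common direct factor would therefore be a direct factor of $K$ with nontrivial center, which is impossible since $Z(K)$ is trivial. So $A$ and $K$ share no direct factors, and Lemma \ref{decomp} gives
\[G\cong \Aut(G)\cong (\Hom(K^{\ab},A)\rtimes \Aut(A))\times K.\]

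To extract information about $A$ we compare centers and orders. The center of the right-hand side is $Z(\Hom(K^{\ab},A)\rtimes\Aut(A))$, while $Z(G)=A$. Comparing orders gives $|A|=|\Hom(K^{\ab},A)|\cdot|\Aut(A)|$. Since $\Hom(K^{\ab},A)\rtimes\Aut(A)$ has order $|A|$ and contains the center, which must be isomorphic to $A$, this group is abelian and isomorphic to $A$. Abelianness forces $\Aut(A)$ to act trivially on $\Hom(K^{\ab},A)$, so $A\cong \Hom(K^{\ab},A)\times \Aut(A)$. In particular $\Aut(A)$ is isomorphic to a direct factor of $A$.

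Lemma \ref{div} then gives $A\cong C_2$ or $A\cong C_6$. In the case $A\cong C_6$ we have $\Aut(A)\cong C_2$, and post-composition by inversion must act trivially on $\Hom(K^{\ab},C_6)$. This means every homomorphism $K^{\ab}\to C_6$ has image of exponent at most $2$. On the other hand we need $\Hom(K^{\ab},C_6)\cong C_3$, which requires a homomorphism onto $C_3$, and inversion does not fix such a homomorphism. This is a contradiction, so $A\cong C_2$.

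With $A\cong C_2$ we have $\Aut(A)$ trivial, and the requirement becomes $\Hom(K^{\ab},C_2)\cong C_2$. This says $K^{\ab}/2K^{\ab}$ has order $2$, which is exactly the condition that the Sylow $2$-subgroup of $K^{\ab}$ is nontrivial and cyclic.

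For the converse, assume $G\cong C_2\times K$ with $K$ complete and the Sylow $2$-subgroup of $K^{\ab}$ nontrivial and cyclic. As above, $C_2$ and $K$ share no direct factor, since $K$ is centerless. Lemma \ref{decomp} gives $\Aut(G)\cong \Hom(K^{\ab},C_2)\times K$. The Sylow condition gives $\Hom(K^{\ab},C_2)\cong C_2$, so $\Aut(G)\cong C_2\times K\cong G$ and $G$ is stable.

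The main obstacle is the step identifying $\Hom(K^{\ab},A)\rtimes\Aut(A)$ with $A$. Comparing centers of the two direct product decompositions needs care, because $K$ is centerless while the semidirect factor might not be. I would first try the cleanest route: $Z(G)=A\times Z(K)=A$, and the center of the right-hand side is the center of the semidirect factor. Equality of orders then forces the semidirect factor to be abelian, so the action is trivial. After that, ruling out $C_6$ is a short check.
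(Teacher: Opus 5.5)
Your proof is correct. It follows the paper's outline: Lemma \ref{split}, then Lemma \ref{decomp}, then the reduction to $A\cong \Hom(K^{\ab},A)\times\Aut(A)$, Lemma \ref{div}, excluding $C_6$, and reading off the Sylow condition. The one genuine difference is how you get $A\cong\Hom(K^{\ab},A)\rtimes\Aut(A)$.

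The paper cancels the factor $K$ from $A\times K\cong(\Hom(K^{\ab},A)\rtimes\Aut(A))\times K$, citing the cancellation theorem for finite direct products. It then uses commutativity of $A$ to make the semidirect product direct.

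You instead compare centers. Since $K$ is centerless, $Z(G)\cong A$, and the center of the right-hand side is $Z(S)$ for $S=\Hom(K^{\ab},A)\rtimes\Aut(A)$. Orders give $|S|=|A|$, so $S=Z(S)\cong A$. This is entirely elementary and removes the dependence on the external cancellation result. It also gives triviality of the post-composition action directly, which makes your exclusion of $C_6$ immediate; the paper phrases the same contradiction as the semidirect product being the nonabelian group of order six. Your worry in the last paragraph is unnecessary: the center of a direct product is the product of the centers, so the argument works as written.

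What the paper's route buys is uniformity. In Lemma \ref{D4} the kernel $N$ is nonabelian, and comparing centers would only give $Z(N)\cong Z(\Hom(K^{\ab},Z(N))\rtimes\Aut(N))$. The paper needs cancellation there anyway, so it uses the same tool in both places.

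Your explicit identification of $\Hom(K^{\ab},C_2)\cong C_2$ with $K^{\ab}/2K^{\ab}$ having order $2$ also fills in the step the paper leaves as ``one therefore sees.''
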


\begin{proof} Assume that $G$ is stable. As the extension is central, the outer action of $G$ on $A$ is trivial. As $A$ is abelian, it is nilpotent, so $G\cong A \times K$ and $K$ is complete by Lemma \ref{split}.

We claim $A\cong \Hom(K^{\ab},A)\times \Aut(A)$. To this end, we will first show $A\cong B$ where $B\cong \Hom(K^{\ab},A)\rtimes \Aut(A)$, with $\Aut(A)$ acting on $\Hom(K^{\ab},A)$ by post-composition. As $K$ is centerless, $K$ has no nontrivial abelian direct factors. Furthermore, since $A$ is abelian, $K$ and $A$ share no direct factors. Thus, by Lemma \ref{decomp}, \[ A\times K \cong G \cong \Aut(G) \cong (\Hom(K^{\ab},A)\rtimes \Aut(A))\times K.\] As the direct product is cancellative for finite groups by \cite{FiniteCancellationGroups}, $A\cong \Hom(K^{\ab},A)\rtimes \Aut(A)$. Further, as $A$ is abelian, we have \[A\cong \Hom(K^{\ab},A)\rtimes \Aut(A)\cong \Hom(K^{\ab},A)\times \Aut(A).\] Thus, by Lemma \ref{div}, $A\cong C_2$ or $A\cong C_6$

Assume towards contradiction $A\cong C_6$. Then $\Aut(A)=\langle \varphi\rangle$ where $\varphi$ is the inversion map. Thus \begin{align*}|\Hom(K^{\ab},A)|&=|A|/|\Aut(A)|=3,\end{align*} so $\Hom(K^{\ab},A)=\langle \psi \rangle\cong C_3$ for some homomorphism $\psi:K^{\ab}\rightarrow A$. Observe that $\varphi \circ \psi$ is the inverse of $\psi$ in $\Hom(K^{\ab},A)$, so the post-composition action of $\Aut(A)$ on $\Hom(K^{\ab},A)$ is inversion. Thus $C_6\cong A\cong \Hom(K^{\ab},A)\rtimes \Aut(A)$ is the nonabelian group of order six, which is clearly a contradiction. As such, $A\cong C_2$, so $G\cong C_2 \times K$. 
We must then have $|\Aut(A)|=1$, and so $
|\Hom(K^{\ab},A)|=2$. One therefore sees that $K^{\ab}$ has a nontrivial cyclic Sylow 2-subgroup. Leveraging Lemma \ref{decomp}, one quickly obtains the converse. \end{proof}

\section{Extensions by Groups of Nilpotency Class Two}

Groups of nilpotency class two, arising as central extensions of abelian groups, serve as a generalization of the abelian groups. As such, extensions of centerless groups by groups of nilpotency class two constitute a natural next step in the search for stable groups with nontrivial central elements and outer automorphisms. In particular, extensions inducing a trivial outer action on the kernel generalize central extensions, ensuring centrality of the center of the kernel by Lemma \ref{extchar}. With this in mind, in this section we classify all stable extensions of centerless groups by groups of nilpotency class two whose induced outer action on the kernel is trivial.

\begin{lemma}
\label{noncyclic} Let $G$ be an abelian $p$-group with no repeated direct factors. If $G$ is not cyclic, then $|G|$ divides $ |\Aut(G)|$.
\end{lemma}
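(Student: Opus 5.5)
Write $G\cong C_{p^{a_1}}\times C_{p^{a_2}}\times\cdots\times C_{p^{a_r}}$ with $a_1<a_2<\cdots<a_r$ strictly increasing; this is possible precisely because $G$ has no repeated direct factors. Since $G$ is not cyclic, we have $r\geq 2$. The plan is to exhibit an explicit subgroup of $\Aut(G)$ of order divisible by $|G|=p^{a_1+\cdots+a_r}$, or equivalently to show that $\upsilon_p(|\Aut(G)|)\geq a_1+\cdots+a_r$.

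The most direct route is the well-known order formula for automorphism groups of abelian $p$-groups (Hillar--Rhea). For distinct exponents it specializes as follows: $\Aut(G)$ maps onto $\prod_i \Aut(C_{p^{a_i}})$, and the kernel has order $\prod_{i\neq j}|\Hom(C_{p^{a_i}},C_{p^{a_j}})|=\prod_{i\neq j}p^{\min(a_i,a_j)}$. If we prefer not to cite the formula, we can instead use the ``unipotent'' subgroup directly. For each ordered pair $i\neq j$ and each $h\in\Hom(C_{p^{a_i}},C_{p^{a_j}})$, the map that sends the $i$-th coordinate $x_i$ to $x_i\cdot h(x_i)$ in the $j$-th coordinate and fixes everything else is an automorphism. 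The automorphisms of the form $x\mapsto x\cdot\psi(x)$, with $\psi$ a nilpotent endomorphism supported strictly above the diagonal (i.e. $\psi$ maps the $i$-th factor into factors $j>i$), form a subgroup of order $\prod_{i<j}p^{\min(a_i,a_j)}=\prod_{i<j}p^{a_i}$. Likewise the ones strictly below the diagonal give order $\prod_{i>j}p^{a_j}$. Either bound alone may be too small, so the cleanest approach is to count with the full formula.

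The key step is then to verify the inequality
\[
\sum_{i}(a_i-1)+\sum_{i\neq j}\min(a_i,a_j)\;\geq\;\sum_i a_i ,
\]
where the first sum accounts for the $p$-part of $\prod_i\phi(p^{a_i})$. For $p$ odd this $p$-part is exactly $p^{a_i-1}$; for $p=2$ it is $2^{a_i-1}$ as well, since $|\Aut(C_{2^a})|=2^{a-1}$. With $a_1<\cdots<a_r$, the double sum equals $2\sum_{i<j}a_i=2\sum_i(r-i)a_i$. It therefore suffices to show that $2\sum_{i}(r-i)a_i\geq r$. This holds because $r\geq 2$ and $a_i\geq i$: the term $i=1$ alone contributes $2(r-1)a_1\geq 2(r-1)\geq r$.

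I expect the main obstacle to be justifying the order formula (or the lower bound) for $|\Aut(G)|$ cleanly in this paper's setting. I would first try to cite Hillar--Rhea's theorem directly. Failing that, I would prove the lower bound by showing two things: the kernel of the reduction $\Aut(G)\to\prod_i\Aut(C_{p^{a_i}})$, obtained by taking the diagonal ``block'' modulo the off-diagonal parts, contains the group $\{\id+\psi\}$ for off-diagonal $\psi$; and each diagonal automorphism lifts. This gives the order $\prod_i\phi(p^{a_i})\prod_{i\neq j}p^{\min(a_i,a_j)}$, at least as a lower bound on the $p$-part. Then I would conclude by the arithmetic above.
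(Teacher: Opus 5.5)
\textbf{Verdict.} Your argument is correct, but it takes a different route from the paper's.

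\textbf{Comparison with the paper.} The paper does not use a full order formula. It splits off only the largest cyclic factor, $G\cong H\times C_{p^{a_n}}$, and applies the Bidwell--Curran--McCaughan formula once to get $|\Aut(G)|=|H|^2\,|\Aut(H)|\,|\Aut(C_{p^{a_n}})|$. It then discards $\Aut(H)$ entirely and uses only $\upsilon_p(|H|)\geq 1$ to obtain $2\upsilon_p(|H|)+a_n-1\geq\upsilon_p(|G|)$.

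You instead cite Hillar--Rhea for the exact valuation $\upsilon_p(|\Aut(G)|)=\sum_i(a_i-1)+2\sum_i(r-i)a_i$. Your specialization to distinct exponents is right. You then reduce to the elementary inequality $2(r-1)a_1\geq r$.

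The paper's route is shorter and reuses a citation it needs anyway. Yours gives the exact $p$-part and shows how much slack there is, at the cost of a heavier citation. The two are closely related: iterating the paper's splitting over every factor recovers exactly your count.

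\textbf{A caution about your wording and fallback plan.} The map sending an automorphism to its diagonal entries in $\prod_i\Aut(C_{p^{a_i}})$ is not a homomorphism. Likewise, $\{\id+\psi\}$ with $\psi$ off-diagonal is not a subgroup. For $G=C_p\times C_{p^2}$, take $\alpha(x,y)=(x,y+px)$ and $\beta(x,y)=(x+\bar y,y)$. Both have identity diagonal, but $\alpha\beta(x,y)=(x+\bar y,(1+p)y+px)$ does not.

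So ``the kernel'' only makes sense as a fiber count. Moreover, a set-theoretic lower bound on $|\Aut(G)|$ would not by itself give divisibility.

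\textbf{How to repair the fallback.} If you want to avoid Hillar--Rhea, reduce the diagonal entries modulo $p$ instead. This is multiplicative, because each cross term $M_{ik}N_{ki}$ in $(MN)_{ii}$ is divisible by $p$ when $a_i\neq a_k$. It gives a surjective homomorphism $\Aut(G)\to\prod_i(\bbZ/p\bbZ)^\times$. Its kernel is the set of endomorphisms whose diagonal entries are $\equiv 1\pmod p$. All of these are invertible, since $M-\id$ then acts strictly triangularly on each layer $p^kG/p^{k+1}G$ and is therefore nilpotent. Hence the kernel is a $p$-group of order $p^{\sum_i(a_i-1)+\sum_{i\neq j}\min(a_i,a_j)}$, which is exactly the valuation you need.
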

\begin{proof}
 Since $G$ is not cyclic, $G$ is the product of $n$ pairwise distinct nontrivial cyclic groups of $p$-power order for some $n \in \bbZ_{> 1}$. Thus $G\cong H\times C_{p^{a_{n}}}$ where $H=\displaystyle\prod_{i=1}^{n-1}C_{p^{a_i}}$ with $1< a_1 < \cdots < a_n$. By \cite[Theorem 3.2]{BidwellProducts}, 
 \begin{align*}
 |\Aut(G)|&=|\Aut(H\times C_{p^{a_{n}}})|\\&=|\Hom(H,C_{p^{a_{n}}})|\cdot |\Hom(C_{p^{a_{n}}},H)|\cdot|\Aut(H)|\cdot |\Aut(C_{p^{a_{n}}} )|. 
 \end{align*} 
 By the additivity of the $\Hom$ functor we have \begin{align*}
\Hom(H,C_{p^{a_{n}}})&=\Hom\biggl(\displaystyle\prod_{i=1}^{n-1}C_{p^{a_i}},C_{p^{a_{n}}}\biggr)
\cong \displaystyle\prod_{i=1}^{n-1}\Hom(C_{p^{a_i}},C_{p^{a_{n}}})\\&\cong \displaystyle\prod_{i=1}^{n-1} C_{p^{a_i}}= H
\end{align*} 
as $\gcd(p^{a_i},p^{a_{n}})=p^{a_i}$ for all $i \in \{1,\dots, n-1\}$. Thus $|\Hom(H,C_{p^{a_{n}}})|=|H|$, and similarly $|\Hom(C_{p^{a_{n}}},H)|=|H|$. Since $|\Aut(C_{p^{a_{n}}})|=(p-1)p^{a_{n}-1}$, it follows that 
\begin{align*}
\upsilon_p(|\Aut(G)|)&=\upsilon_p(|H|^2\cdot |\Aut(H)|\cdot |\Aut(C_{p^{a_{n}}})|)\geq 2\upsilon_p(|H|)+a_{n}-1.\end{align*} Note $H$ is itself a nontrivial $p$-group, so $p$ divides $|H|$, and thus \begin{align*}2\upsilon_p(|H|)+a_{n}-1\geq \upsilon_p(|H|)+a_{n}=\upsilon_p(|H|\cdot |C_{p^{a_n}}|)=\upsilon_p(|G|).
\end{align*} 
As $G$ is a $p$-group, we therefore conclude that $|G|$ divides $|\Aut(G)|$.
\end{proof}

\begin{lemma}\label{pure} Let $N$ be a group of nilpotency class two. If $|\Aut(N)|$ divides $|N|$, then each Sylow $p$-subgroup of $N$ is abelian or purely nonabelian.
\end{lemma}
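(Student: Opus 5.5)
The plan is to reduce to a single Sylow subgroup and then prove a $p$-adic inequality for that subgroup. Since $N$ is nilpotent, $N=\prod_q P_q$ is the direct product of its Sylow subgroups. These have pairwise coprime orders, so $\Aut(N)\cong \prod_q \Aut(P_q)$. Fix a prime $p$ and write $P=P_p$. The hypothesis gives
\[\upsilon_p(|\Aut(P)|)\leq \upsilon_p(|\Aut(N)|)\leq \upsilon_p(|N|)=\upsilon_p(|P|).\]
Here the divisibility is used only one prime at a time, because $\Aut(P_q)$ may have order divisible by primes other than $q$. It therefore suffices to show the following: if $P$ is a nonabelian $p$-group of class at most two that has a nontrivial abelian direct factor, then $\upsilon_p(|\Aut(P)|)>\upsilon_p(|P|)$, which contradicts the display above.

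For the decomposition, I will use Krull--Schmidt to write $P=Q\times A$ with $A$ abelian and nontrivial, and with $Q$ having no abelian direct factor. I take $A$ to be the product of all abelian indecomposable factors. Then $Q$ is nonabelian, because $P$ is nonabelian, and $Q$ has class two. Also $Q$ and $A$ share no direct factors. So \cite[Theorem 3.2]{BidwellProducts} applies, exactly as in the proofs of Lemma \ref{decomp} and Lemma \ref{noncyclic}:
\[|\Aut(P)|=|\Hom(Q,A)|\cdot|\Hom(A,Z(Q))|\cdot|\Aut(Q)|\cdot|\Aut(A)|.\]
I then bound each factor's $p$-part from below.
\begin{enumerate}[label=\roman*)]
\item For $\Aut(Q)$: $Q$ is a nonabelian $p$-group of class two, so $|Q|$ divides $|\Aut(Q)|$. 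I would either cite this (a classical divisibility result for class-two $p$-groups) or argue it directly. The direct argument counts the inner automorphisms, of order $|Q/Z(Q)|$, together with the central automorphisms coming from $\Hom(Q^{\ab},Z(Q))$, which is large because $Q'\leq Z(Q)$.
\item For the cross terms: $Q^{\ab}$ is noncyclic by the Burnside basis theorem, since $Q$ is a nonabelian $p$-group. Also $Z(Q)$ is nontrivial. Hence $|\Hom(Q,A)|=|\Hom(Q^{\ab},A)|$ and $|\Hom(A,Z(Q))|$ are both nontrivial powers of $p$.
\end{enumerate}

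The remaining task is to show that the $p$-parts of $\Hom(Q^{\ab},A)$, $\Hom(A,Z(Q))$ and $\Aut(A)$ together contribute strictly more than $\upsilon_p(|A|)$. This is the main obstacle. A factor-by-factor count can be too weak, for example when $A=C_2$, where $\Aut(A)$ is trivial and the two $\Hom$ groups may each contribute only $p$. My first approach is to decompose $A=\prod_i C_{p^{a_i}}$ and pick a cyclic factor $C_{p^b}$ of $Q^{\ab}$ of maximal order. If $b\geq \max a_i$, then $\Hom(C_{p^b},A)\cong A$ already gives $|A|$. In that case $\Hom(A,Z(Q))$ supplies at least one further factor of $p$, and we are done. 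If $b<\max a_i$, I would instead borrow from $\Aut(Q)$ the surplus $p$-power beyond $|Q|$ that comes from central automorphisms. Alternatively, I would split off the large cyclic factors of $A$ and handle them through $\Aut(A)$ and Lemma \ref{noncyclic}-style counting. Once this inequality is in hand, the contradiction shows that each Sylow $p$-subgroup is abelian or purely nonabelian.
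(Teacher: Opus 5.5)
Your setup is the paper's. You reduce to a Sylow subgroup $P$, split $P=Q\times A$ with $Q$ purely nonabelian and $A$ abelian, apply Bidwell's formula, use Faudree's theorem for $\Aut(Q)$, and note that each cross term contributes at least one factor of $p$. The gap is that the proof stops at the only remaining step, and your diagnosis of that step is wrong. The example $A=C_2$ does not defeat the termwise count: the two $\Hom$ groups give $1+1$ and $\Aut(A)$ gives $0$, which already exceeds $\upsilon_p(|A|)=1$.

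What you are missing is the bound
\[\upsilon_p(|\Aut(A)|)\geq \upsilon_p(|A|)-1\quad\text{for every nontrivial abelian $p$-group } A.\]
With it, $\upsilon_p(|\Aut(P)|)\geq 1+1+(\upsilon_p(|A|)-1)+\upsilon_p(|Q|)=\upsilon_p(|P|)+1$, which is exactly how the paper finishes. The bound holds in every case:
\begin{itemize}
\item For cyclic $A$ it is an equality, since $|\Aut(C_{p^a})|=(p-1)p^{a-1}$.
\item For noncyclic $A$ without repeated factors it follows from Lemma \ref{noncyclic}, which is the route the paper takes.
\item In full generality, the $p$-part of $|\GL_m(\bbZ/p^b\bbZ)|$ is $p^{(b-1)m^2+m(m-1)/2}\geq p^{bm-1}$. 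Combined with Bidwell's formula across homocyclic components of distinct exponents, this gives the bound without needing the paper's exclusion of repeated factors.
\end{itemize}

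The fallback routes you sketch do not supply this bound. Your case $b\geq\max a_i$ is fine. Borrowing a surplus from $\Aut(Q)$ fails because there need not be any surplus: $|\Aut(D_4)|=|D_4|$, and $P=D_4\times C_4$ falls in your case $b=1<2$. Lemma \ref{noncyclic}-style counting needs the no-repeated-factors hypothesis and says nothing about cyclic factors.

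Separately, your direct argument for $|Q|$ dividing $|\Aut(Q)|$ does not work as sketched. In class two every inner automorphism is central, so $\Inn(Q)\leq\Aut_c(Q)$ and the two counts cannot be multiplied. Central automorphisms alone can also fall short, since $|\Hom(D_4^{\ab},Z(D_4))|=4<8$. Rely on Faudree's theorem, as the paper does.
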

\begin{proof}
Let $N_p$ be a Sylow $p$-subgroup of $N$. Suppose towards contradiction $N_p$ is neither abelian nor purely nonabelian. Then we may decompose $N_p\cong A_p \times B_p$ such that $B_p$ is purely nonabelian and $A_p$ is abelian, with $A_p$ and $B_p$  both nontrivial. By \cite[Theorem 3.2]{BidwellProducts}, \begin{align*}\upsilon_p(|\Aut(N_p)|)&=\upsilon_p(|\Hom(A_p,Z(B_p))|)+\upsilon_p(|\Hom(B_p,A_p)|)\\&+\upsilon_p(|\Aut(A_p)|)+\upsilon_p(|\Aut(B_p)|).\end{align*} Note that $Z(B_p)$ is nontrivial, so $\upsilon_p(|\Hom(A_p,Z(B_p))|)\geq 1$. As $B_p$ is necessarily nilpotent of class two, $B_p^{\ab}$ is nontrivial, so \[\upsilon_p(|\Hom(B_p,A_p)|)=\upsilon_p(|\Hom(B_p^{\ab},A_p)|)\geq 1.\] 

As $B_p$ is nilpotent of class two, $\upsilon_p(|B_p|)\leq \upsilon_p(|\Aut(B_p)|)$ by \cite{FaudreeNil2}. If $A_p$ is cyclic, we find $\upsilon_p(|\Aut(A_p)|)=\upsilon_p(\phi(|A_p|))=\upsilon_p(|A_p|)-1$. Otherwise, as $\Aut(C_{p^a}\times C_{p^a})\cong \GL_2(\bbZ/p^a\bbZ)\twoheadrightarrow \GL_2(\bbZ/p\bbZ)$ is not nilpotent for all $a\geq 1$, $A_p$ has no repeated direct factors. Thus, by Lemma \ref{noncyclic}, we have $\upsilon_p(|\Aut(A_p)|)\geq \upsilon_p(|A_p|)> \upsilon_p(|A_p|)-1$. In both cases, we have 
\begin{align*}
\upsilon_p(|\Aut(N_p)|)&\geq 1+1+(\upsilon_p(|A_p|)-1)+\upsilon_p(|B_p|)\\&=1+\upsilon_p(|A_p|\cdot |B_p|)=1+\upsilon_p(|N_p|).\end{align*} As $\Aut(N_p)$ is isomorphic to a direct factor of $\Aut(N)$, we have \[\upsilon_p(|\Aut(N_p)|)\leq \upsilon_p(|\Aut(N)|)\leq \upsilon_p(|N|)=\upsilon_p(|N_p|),\] which is clearly a contradiction. Thus, $N_p$ is either abelian or purely nonabelian.
\end{proof}

\begin{lemma}\label{D4} Suppose $N\times K$ is stable, where $N$ is a nontrivial nilpotent group of class two and $K$ is complete. Then $N\cong D_4$. 
\end{lemma}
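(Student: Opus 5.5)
By Lemma \ref{decomp} and cancellation, the statement should reduce to a structural question about $N$ alone. We then squeeze $N$ with order considerations until only one nonabelian $p$-Sylow survives, and identify that Sylow as $D_4$ using known results on stable $p$-groups of class two.

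\textbf{Reduction to $N$.} Since $K$ is complete, it is centerless. A nontrivial direct factor of $N$ is nilpotent and so has nontrivial center, and a direct factor of $K$ would have to share that center. Hence $N$ and $K$ share no direct factors. Lemma \ref{decomp} then gives
\[
N\times K\cong \Aut(N\times K)\cong (\Hom(K^{\ab},Z(N))\rtimes \Aut(N))\times K,
\]
and cancellation of finite direct products yields $N\cong \Hom(K^{\ab},Z(N))\rtimes \Aut(N)$. Two consequences follow. First, $\Aut(N)$ embeds in $N$, so $\Aut(N)$ is nilpotent and $|\Aut(N)|$ divides $|N|$. Second, $|N|=|\Hom(K^{\ab},Z(N))|\cdot|\Aut(N)|$. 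By Lemma \ref{pure}, every Sylow subgroup $N_p$ of $N$ is abelian or purely nonabelian. Write $\Aut(N)=\prod_p \Aut(N_p)$; each factor is nilpotent and its order divides $|N|$.

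\textbf{Pinning down the Sylows.} Because $N$ has class two, at least one Sylow $B=N_p$ is purely nonabelian of class two. The result of \cite{FaudreeNil2} gives $\upsilon_p(|B|)\le\upsilon_p(|\Aut(B)|)$. On the other hand, $\upsilon_p(|\Aut(N)|)\le\upsilon_p(|N|)=\upsilon_p(|B|)$. Comparing these with the order equation above, I expect to conclude three things at the prime $p$:
\begin{itemize}
\item the Sylow $p$-subgroup of $\Aut(B)$ is isomorphic to $B$;
\item every abelian factor contributes no $p$-part to $\Aut(N)$;
\item $\Hom(K^{\ab},Z(N))$ has trivial $p$-part.
\end{itemize}
Running the same bookkeeping over all primes, the aim is to show that $N=B$ is a single $p$-group with $\Aut(B)\cong B$, so that there are no abelian Sylows and $\Hom(K^{\ab},Z(N))$ is trivial. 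To eliminate an abelian Sylow $A_q$, I would use Lemma \ref{noncyclic} and the $\GL_2$ argument from Lemma \ref{pure} to force $A_q$ to be cyclic, with $\upsilon_q(|\Aut(A_q)|)=\upsilon_q(|A_q|)-1$. The missing $q$-part then has to come from $\Hom(K^{\ab},Z(N))$. At that point I would play the semidirect structure against the abelianness of $A_q$, exactly as in the $C_6$ case of Theorem \ref{abthm}: there the post-composition action of $\Aut(A)$ on $\Hom(K^{\ab},A)$ was nontrivial, which produced a nonabelian group where an abelian one was required.

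\textbf{Identifying $B$.} Once $N$ is a stable $p$-group of nilpotency class two, the conclusion $N\cong D_4$ should follow from Cutolo's results \cite{CutoloD4}, which exclude stable $p$-groups of class two other than $D_4$. If that cannot be cited verbatim, I would argue directly. Since $\Aut(N)\cong N$ is a $p$-group, $\Inn(N)\cong N/Z(N)$ is abelian and the central automorphisms form a large subgroup; counting $|\Hom(N^{\ab},Z(N))|$ against $|N|$ should force $|N|=8$. A nonabelian group of order $8$ has $\Aut$ a $2$-group only for $D_4$, which settles the case.

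\textbf{Main obstacle.} The hardest step is the bookkeeping across primes that rules out the abelian Sylows and kills $\Hom(K^{\ab},Z(N))$ at the prime of $B$. The difficulty is that $\Aut(A_q)$ and $\Aut(B)$ may contribute factors at primes other than their own, and these must be matched against the other Sylows of $N$ without circularity.
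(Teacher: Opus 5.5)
\textbf{There is a genuine gap.} Your reduction to $N\cong \Hom(K^{\ab},Z(N))\rtimes\Aut(N)$ is correct. So are your three conclusions at the prime $p$ of a purely nonabelian Sylow subgroup $B$. But these only control the $p$-part of $\Aut(B)$. To know that $B$ is stable, so that \cite{CutoloD4} applies, you need $\Aut(B)$ to be a $p$-group. Your plan never rules out a prime $q$ of an abelian Sylow subgroup dividing $|\Aut(B)|$; this is exactly the ``main obstacle'' you name, and you leave it unresolved.

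Order bookkeeping alone cannot resolve it. Take a hypothetical $N=C_{2^a}\times C_{3^b}\times P$, where $P$ is a purely nonabelian $t$-group of class two ($t\geq 5$) with $|\Aut(P)|=3|P|$, and $\Hom(K^{\ab},Z(N))$ is trivial. Then $|\Aut(N)|=2^{a-1}\cdot 2\cdot 3^{b-1}\cdot 3|P|=|N|$, and every $\upsilon_r$-comparison, including Faudree's bound, is satisfied. So your claim that the missing $q$-part ``has to come from'' $\Hom(K^{\ab},Z(N))$ is unjustified. The $C_6$-style semidirect argument is also vacuous here, since that $\Hom$ group is trivial. Likewise, Lemma \ref{noncyclic} only gives $\upsilon_q(|\Aut(A_q)|)\geq \upsilon_q(|A_q|)$, which does not by itself force $A_q$ to be cyclic; for example, $|\Aut(C_2\times C_4)|=8=|C_2\times C_4|$.

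\textbf{The missing ingredient is structural.} An automorphism $\varphi\in\Aut(B)$ of prime order $q\neq p$ must commute with $\Inn(B)$, and hence is a central automorphism. By \cite[Corollary 2]{AdneyCentAut}, the central automorphisms of the purely nonabelian $p$-group $B$ form a $p$-group, so $\varphi$ is trivial.

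The paper gets the commutation for primes $q$ of abelian Sylows; primes of other nonabelian Sylows are already excluded by the bookkeeping you describe. Its argument: the image of $\varphi$ in $N\cong A\times B$ has order coprime to $|B|$, so it lies in the abelian, hence central, factor $A$. This forces $\varphi\in Z(\Aut(B))$. You could equally get it for every $q\neq p$ from the nilpotency of $\Aut(N)$, which you already observed: elements of coprime order in a nilpotent group commute, and $\Inn(B)$ is a $p$-group.

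With this in hand, the rest goes through as in the paper:
\begin{enumerate}[label=\roman*)]
\item $|\Aut(N_p)|=|N_p|$ for every nonabelian Sylow subgroup, so each is stable and hence $\cong D_4$ by \cite{CutoloD4}.
\item Only then are the abelian Sylows handled. The nonabelian prime is $2$, so the abelian part $A$ has odd order.
\item From $|A|=|\Hom(K^{\ab},Z(N))|\cdot|\Aut(A)|$, the inversion automorphism makes $|\Aut(A)|$ even, a contradiction unless $A$ is trivial.
\end{enumerate}
This ordering (nonabelian Sylows first, abelian part last by parity) avoids the cyclicity and semidirect-product analysis you proposed entirely.
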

\begin{proof} 
 As $N\times K$ is stable, \begin{align*}N\times K \cong \Aut(N\times K)\cong (\Hom(K^{\ab},Z(N))\rtimes \Aut(N))\times K\end{align*} by Lemma \ref{decomp}. As the direct product is cancellative for finite groups by \cite{FiniteCancellationGroups}, \[N\cong \Hom(K^{\ab},Z(N))\rtimes \Aut(N),\] so $|\Aut(N)|$ divides $|N|$. Let $\mP$ be the set of primes dividing $|N|$. For each prime $p \in \mP$, choose a Sylow $p$-subgroup of $N$ and denote it $N_p$. Let \[\mA=\{p \in \mP \mid N_p \textrm{ is abelian}\}\] and \[\mB=\{p \in \mP \mid N_p \textrm{ is purely nonabelian}\},\] and note that $\mA \sqcup \mB = \mP$ by Lemma \ref{pure}. Let $A=\displaystyle\prod_{p \in \mA}N_{p}$ and $B=\displaystyle\prod_{p \in \mB}N_p$. Suppose towards contradiction there exists a prime $p \in \mA$ such that $p$ divides $|\Aut(B)|$. Then there is an element $\varphi \in \Aut(B)$ with order $p$. Let $\varphi_N$ be the image of $\varphi$ in $\Aut(N)$ induced by the natural isomorphism $N\cong A\times B$. Consider $\varphi_N$ embedded in $\Hom(K^{\ab},Z(N))\rtimes \Aut(N)\cong N \cong A\times B$. As $p \in \mA$ and the image of $\varphi_N$ in $A\times B$ has order $p$, the image of $\varphi_N$ in $A\times B$ projects trivially onto $B$. As such, the image of $\varphi_N$ in $A\times B$ is central, so $\varphi_N \in Z(\Hom(K^{\ab},Z(N))\rtimes \Aut(N))$. Thus, $\varphi_N \in Z(\Aut(N))$, and so $\varphi \in Z(\Aut(B))$. As $\varphi$ is nontrivial, there exists a prime $q \in \mB$ such that the projection of $\varphi$ onto $\Aut(N_q)$, denoted $\varphi_q$, has order $p$. Observe that $\varphi_q\in Z(\Aut(N_q))\leq \Aut_c(N_q)$. As $N_q$ is purely nonabelian, $\Aut_c(N_q)$ is a $q$-group by \cite[Corollary 2]{AdneyCentAut}. As $\varphi_q \in \Aut_c(N_q)$ has order $p$, $p$ divides $|\Aut_c(N_q)|$, which is clearly a contradiction as $p$ and $q$ are coprime by Lemma \ref{pure}. We therefore find that $\gcd(p,|\Aut(B)|)=1$ for all $p \in \mA$.

 For all primes $p \in \mB$, we have \[\upsilon_p(|B|)\leq \upsilon_p(|\Aut(B)|)\leq \upsilon_p(|\Aut(N)|)\leq \upsilon_p(|N|)=\upsilon_p(|B|)\] by \cite{FaudreeNil2} and the definition of $\mB$. Thus, as all primes dividing $|\Aut(B)|$ are members of $\mB$, we have $|\Aut(B)|=|B|$. 
 
As \[\displaystyle\prod_{p \in \mB}|\Aut(N_p)|=|\Aut(B)|=|B|=\displaystyle\prod_{p \in \mB}|N_p|\] and $|N_p|\leq |\Aut(N_p)|$ for all $p \in \mB$ by \cite{FaudreeNil2}, we have $|N_p|=|\Aut(N_p)|$ for all $p \in \mB$. Therefore, for all $p \in \mB$, $\Aut(N_p)$ is isomorphic to a Sylow $p$-subgroup of $ \Hom(K^{\ab},Z(N))\rtimes \Aut(N)\cong N$. Thus $N_p$ is stable for all primes $p \in \mB$. As $N$ is nilpotent of class two, $\mB\neq \emptyset$. By \cite{CutoloD4}, the only stable $p$-group of nilpotency class two is the dihedral group of order eight, so $B\cong D_4$.

 Observe that \begin{align*}|A|\cdot |B|&=|\Hom(K^{\ab},Z(N))|\cdot |\Aut(N)|\\&=|\Hom(K^{\ab},Z(N))|\cdot |\Aut(A)|\cdot |\Aut(B)|.\end{align*} As $B\cong D_4$ is stable, we therefore have $|A|=|\Hom(K^{\ab},Z(N))|\cdot |\Aut(A)|$. Since $2 \in \mB$ and $\mA\cap \mB=\emptyset$, $A$ has odd order. If $A$ is nontrivial, then the inversion automorphism of $A$ has order two, so $\Aut(A)$ has even order, which is clearly a contradiction. Thus $A$ is trivial, so $N\cong D_4$.
\end{proof}

\begin{cor} If $N$ is a stable group of nilpotency class two, then $N\cong D_4$.
\end{cor}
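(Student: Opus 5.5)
The corollary should follow from Lemma \ref{D4} by choosing the complete factor $K$ to be trivial. The trivial group $K=\{e\}$ is centerless, and $\Aut(K)$ is trivial, so the conjugation map $c:K\rightarrow\Aut(K)$ is an isomorphism. Hence $K$ is complete. Given a stable group $N$ of nilpotency class two, we have $N\cong N\times K$, so $N\times K\cong N\cong\Aut(N)\cong\Aut(N\times K)$. Thus $N\times K$ is stable. Since $N$ has class exactly two, it is nontrivial and nilpotent of class two. Lemma \ref{D4} then gives $N\cong D_4$.

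The hypotheses of the results that Lemma \ref{D4} relies on also need to hold in this degenerate case. Lemma \ref{decomp} requires that $N$ and $K$ share no direct factors. This holds vacuously when $K$ is trivial, since we only count nontrivial factors. For trivial $K$, the lemma's conclusion reads $\Aut(N)\cong(\Hom(\{e\},Z(N))\rtimes\Aut(N))\times\{e\}\cong\Aut(N)$. This is immediate, and the Bidwell count is consistent with it. The cancellation step in Lemma \ref{D4} then just reads $N\cong\Aut(N)$, which is our hypothesis. Every later step of that proof uses only $N\cong\Hom(K^{\ab},Z(N))\rtimes\Aut(N)$, which here is $N\cong\Aut(N)$, together with facts about $N$ alone.

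If a referee were uneasy about applying Lemma \ref{D4} with $K$ trivial, I would instead repeat its argument directly from $N\cong\Aut(N)$. That isomorphism gives $|\Aut(N)|=|N|$, so Lemma \ref{pure} applies, and $N$ splits as $A\times B$ with $A$ the product of the abelian Sylow subgroups and $B$ the product of the purely nonabelian ones. The coprimality argument via central automorphisms (\cite{AdneyCentAut}) and the inequality of \cite{FaudreeNil2} force each Sylow subgroup $N_p$ with $p\in\mB$ to be stable. Cutolo's result (\cite{CutoloD4}) then gives $B\cong D_4$. Finally, $A$ has odd order and satisfies $|\Aut(A)|=|A|$, and the inversion automorphism shows that $A$ is trivial.

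The only point needing any care is this degenerate use of Lemma \ref{decomp} with trivial $K$. I expect no real obstacle there.
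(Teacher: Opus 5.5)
Your proposal is correct and matches the paper's proof: the paper also views $N$ as $N\times\{e\}$, with the trivial group serving as the complete factor, and applies Lemma \ref{D4} directly. Your check that Lemma \ref{decomp} and the cancellation step reduce harmlessly to $N\cong\Aut(N)$ when $K$ is trivial is a sensible addition that the paper leaves implicit.
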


\begin{proof} Viewing $N$ as isomorphic to a direct product of itself with the trivial group, Lemma \ref{D4} immediately gives the desired result.
\end{proof}

\begin{thm} \label{nilthm} Let $1\rightarrow N\rightarrow G\rightarrow K\rightarrow 1$ be an extension inducing a trivial outer action of $G$ on $N$, such that $N$ is nilpotent of class two and $K$ is centerless. Then $G$ is stable if and only if $G\cong D_4\times K$, $K$ is complete, and $|K^{\ab}|$ is odd.
\end{thm}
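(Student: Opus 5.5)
For the forward direction, assume $G$ is stable. Since $N$ is nilpotent (class two), $K$ is centerless, and the induced outer action is trivial, Lemma \ref{split} gives $G\cong N\times K$ with $K$ complete. Lemma \ref{D4} then yields $N\cong D_4$. It remains to extract the condition on $K^{\ab}$. The group $K$ is complete and centerless, so it has no nontrivial abelian direct factor. Since $D_4$ is directly indecomposable and nonabelian while $K$ is centerless, $D_4$ is not a direct factor of $K$. Hence $N$ and $K$ share no direct factors, and Lemma \ref{decomp} applies:
\[D_4\times K\cong G\cong \Aut(G)\cong \bigl(\Hom(K^{\ab},Z(D_4))\rtimes \Aut(D_4)\bigr)\times K.\]
Cancelling $K$ via \cite{FiniteCancellationGroups} and using $|\Aut(D_4)|=8=|D_4|$, we get $|\Hom(K^{\ab},C_2)|=1$, since $Z(D_4)\cong C_2$. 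A finite abelian group with no nontrivial homomorphism to $C_2$ has odd order, so $|K^{\ab}|$ is odd.

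For the converse, assume $G\cong D_4\times K$ with $K$ complete and $|K^{\ab}|$ odd. As above, $D_4$ and $K$ share no direct factors, so Lemma \ref{decomp} gives
\[\Aut(G)\cong \bigl(\Hom(K^{\ab},C_2)\rtimes\Aut(D_4)\bigr)\times K.\]
Oddness of $|K^{\ab}|$ forces $\Hom(K^{\ab},C_2)$ to be trivial, so $\Aut(G)\cong \Aut(D_4)\times K\cong D_4\times K\cong G$, using the standard fact $\Aut(D_4)\cong D_4$. Thus $G$ is stable. Note that the hypotheses on the extension (trivial outer action, $N$ of class two) are automatically consistent with $G\cong D_4\times K$, since the direct product is such an extension. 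This direction proves only that this particular $G$ is stable, which is what the biconditional requires.

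The main point needing care is the verification that $D_4$ and $K$ share no direct factors, which is needed to invoke Lemma \ref{decomp} in both directions. A common direct factor would be a nontrivial direct factor of $D_4$, and since $D_4$ is indecomposable it would have to be $D_4$ itself. But $D_4$ has nontrivial center, while any direct factor of a centerless group is centerless, so such a factor cannot exist. Everything else is a direct combination of Lemma \ref{split}, Lemma \ref{D4}, and Lemma \ref{decomp}, mirroring the proof of Theorem \ref{abthm}.
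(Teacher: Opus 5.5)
Your proof is correct and follows the paper's argument: Lemma \ref{split} and Lemma \ref{D4} reduce to $G\cong D_4\times K$ with $K$ complete, Lemma \ref{decomp} together with $|\Aut(D_4)|=|D_4|$ forces $\Hom(K^{\ab},C_2)$ to be trivial, and the converse follows from the same decomposition. Your explicit check that $D_4$ and $K$ share no direct factors supplies a hypothesis of Lemma \ref{decomp} that the paper leaves implicit, and the appeal to cancellation is valid though unnecessary, since comparing orders already suffices.
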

\begin{proof}
Suppose $G$ is stable. By Lemma \ref{split}, $G\cong N\times K$ and $K$ is complete. By Lemma \ref{D4} $N\cong D_4$, so \begin{align*}\Aut(N\times K)&\cong (\Hom(K^{\ab},Z(N))\rtimes \Aut(N))\times K \\&\cong (\Hom(K^{\ab},Z(D_4))\rtimes \Aut(D_4))\times K\end{align*} by Lemma \ref{decomp}. As $D_4$ is stable, we have $|\Aut(D_4)|=|D_4|$, and thus \[|\Hom(K^{\ab},C_2)|=|\Hom(K^{\ab},Z(D_4))|=1.\] As such, $|K^{\ab}|$ must be odd.

Finally, suppose $K$ is complete, $|K^{\ab}|$ is odd, and $G\cong D_4\times K$. Leveraging Lemma \ref{decomp}, one quickly obtains the converse.
\end{proof}

\bibliographystyle{amsplain}
 \bibliography{bibliography}
\end{document}

%% file: mymakros.tex
\newcommand{\Inn}{\textrm{Inn}}

\newcommand{\im}{\textrm{im}\,}
\newcommand{\norm}{\trianglelefteq}

\DeclareMathOperator{\ab}{ab}

\DeclareMathOperator{\Aut}{Aut}

\DeclareMathOperator{\GL}{GL}

\DeclareMathOperator{\Hom}{Hom}
\DeclareMathOperator{\id}{id}

\newcommand{\mA}{\mathcal{A}}
\newcommand{\mB}{\mathcal{B}}

\newcommand{\mP}{\mathcal{P}}

\newcommand{\bbZ}{\mathbb{Z}}

\newcommand{\bmat}{\setstretch{1}\left[ \begin{matrix}}
\newcommand{\emat}{\end{matrix} \right]\setstretch{2}}

\newcommand{\tm}



%% file: settings.tex
\usepackage[all]{xy}
\usepackage{amsthm}
\usepackage{amssymb, amsfonts}
\SelectTips{cm}{10}\UseTips

\theoremstyle{plain}
\newtheorem{thm}{Theorem}

\newtheorem{cor}[thm]{Corollary}
\newtheorem{lemma}[thm]{Lemma}

\theoremstyle{definition}

\numberwithin{thm}{section}
\numberwithin{equation}{section}